\documentclass[11pt,leqno]{amsart}
\usepackage{amsthm,amsfonts,amssymb,amsmath,oldgerm,color,bm,multicol}

\usepackage{amsthm,amsfonts,amssymb,amsmath,color}

\hyphenpenalty=5000
\tolerance=1000

\numberwithin{equation}{section}

\renewcommand\d{\partial}

\renewcommand\o{\omega}
\newcommand\s{\sigma}

\newcommand\R{\mathbb R}\newcommand\Z{\mathbb Z}

\def\de{\delta}

\def\epsilon{\varepsilon}
\def\e{\varepsilon}

\def\nnn{\mathbf{n}}


\newcommand\br{\begin{rem}}
\newcommand\er{\end{rem}}
\newcommand\bp{\begin{pmatrix}}
\newcommand\ep{\end{pmatrix}}
\newcommand\be{\begin{equation}}
\newcommand\ee{\end{equation}}
\newcommand\ba{\begin{equation}\begin{aligned}}
\newcommand\ea{\end{aligned}\end{equation}}

\newcommand\nn{\nonumber}



\setlength\topmargin{0pt}
\addtolength\topmargin{-\headheight}
\addtolength\topmargin{-\headsep}
\setlength{\evensidemargin}{0pt}
\setlength{\oddsidemargin}{0pt}
\setlength\textwidth{\paperwidth}
\addtolength\textwidth{-2in}
\setlength\textheight{\paperheight}
\addtolength\textheight{-2in}
\usepackage{layout}

%


\newcommand{\CalB}{\mathcal{B}}

\newcommand{\II}{{\mathbb I}}

\newcommand{\uu}{{\mathbf u}}

\newcommand{\ff}{{\mathbf f}}

\newcommand{\dx}{{\rm d} {x}}


\newcommand{\dive}{{\rm div\,}}

\newtheorem{defi}{Definition}[section]
\newtheorem{theorem}[defi]{Theorem}

\newtheorem{lemma}[defi]{Lemma}

\numberwithin{equation}{section}

\begin{document}

\title{Homogenization of Stokes equations in perforated domains: a unified approach}

\author{Yong Lu} 
\thanks{Department of Mathematics, Nanjing University, 22 Hankou Road, Gulou District, 210093 Nanjing, China. Email: luyong@nju.edu.cn. \\
The author warmly thanks Wenjia Jing and Christophe Prange for helpful discussions. The work of the author has been supported by the Recruitment Program of Global Experts of China. This work is partially supported by project ANR JCJC BORDS funded by l'ANR of France.}

\date{}

\begin{abstract}

We consider the homogenization of the Stokes equations in a domain perforated with a large number of small holes which are periodically distributed. In \cite{ALL-NS1,ALL-NS2}, Allaire gave a systematic study on this problem. In this paper, we introduce a unified proof for different sizes of holes for the homogenization of the Stokes equations by employing a generalized cell problem inspired by Tartar \cite{Tartar80}. 
\end{abstract}

\maketitle

{\bf Keywords}: homogenization; Stokes equations; perforated domain; cell problem.

 {\bf MSC Code}: 35B27, 35Q35, 76S05.


\renewcommand{\refname}{References}


\section{Introduction}

Homogenization problems in the framework of fluid mechanics have gain a lot interest both in mathematical analysis and numerical analysis. Such problems represent the study of fluid flows in domains perforated with a large number of tiny holes (obstacles). The goal is to describe the asymptotic behavior of fluid flows (governed by Stokes equations, Navier-Stokes equations, etc.) as the number of holes goes to infinity and the size of holes goes to zero simultaneously. The limit equations that describe the limit behavior of fluid flows are called {\em homogenized equations} which are defined in homogeneous domains without holes.

\medskip

The perforated domain under consideration is described as follows. Let $\Omega\subset \R^d, \ d\geq 2$ be a bounded domain of class $C^{1}$.  The holes in $\Omega$ are denoted by $T_{\e,k}$ which are assumed to satisfy
\be\label{def-holes1}
B(\e x_k, \de_1 a_{\e}) \subset \subset T_{\e, k}  = \e x_k + a_{\e} T  \subset  \subset B(\e x_k, \de_2 a_{\e}) \subset\subset B(\e x_k, \de_3 \e)  \subset \subset \e Q_k,
\ee
where the cube $Q_k : = (-\frac{1}{2},\frac{1}{2})^d + k$ and $x_k = x_0 + k$ with $x_0 \in (-\frac{1}{2},\frac{1}{2})^d$, for each $k\in \Z^d$; $T$ is a model hole which is assumed to be closed, bounded, and simply connected, with $C^{1}$ boundary; $\de_{i}, \, i=1,2,3$ are fixed positive numbers.  The perforation parameters $\e$ and $a_{\e}$ are used to measure the mutual distance of holes and the size of holes, respectively,
 and $\e x_{k} = \e x_{0} + \e k$ are the locations of the holes. Without loss of generality, we assume that $x_0 = 0$ and $0< a_{\e} < \e \leq 1.$ Otherwise it is sufficient to consider the domain with a shift of $\e x_0$ and consider different values of $\de_{i}, \ i=1,2,3.$

The perforated domain (fluid domain) $\Omega_\e$ is then defined as:
\be\label{def-Oe}
\Omega_\e:=\Omega \setminus  \bigcup_{k\in K_\e} T_{\e,k}, \quad \mbox{where} \ K_\e:=\{k\in \Z^{d} \ : \ \e \overline {Q}_k\subset \Omega\}.
\ee
Throughout the paper, we consider the following Dirichlet problem of Stokes equations in $\Omega_\e$: \be\label{Stokes-Oe}
\left\{\begin{aligned}
-\Delta \uu_\e +\nabla p_\e &= \ff,\quad &&\mbox{in}~\Omega_\e,\\
\dive \uu_\e &=0 ,\quad &&\mbox{in}~\Omega_\e,\\
\uu_\e & = 0,\quad &&\mbox{on} ~\d\Omega_\e.
\end{aligned}\right.
\ee
Here we take the external force $\ff\in L^2(\Omega)$.

\medskip

For each fixed $\e>0$, the domain $\Omega_{\e}$ is bounded and is of $C^{1}$; the existence and uniqueness of the weak solution $(\uu_\e, p_{\e})\in W^{1,2}_{0}(\Omega_{\e};\R^{d}) \times L_{0}^{2}(\Omega_{\e})$ to \eqref{Stokes-Oe} is  known, see for instance \cite{Galdi-book}. Here $W^{1,2}_{0}$ denotes the Sobolev space with zero trace, and $L_{0}^{2}$ is the collection of all $L^{2}$ functions with zero average.

The behavior of the solution family $\{\uu_\e\}_{\e>0}$ as $\e\to 0$ was studied by Tartar \cite{Tartar80} for the case where the size of the holes is proportional to the mutual distance of the holes, i.e.
\be\label{case-0}
a_{\e}  =  a_{*}\e \quad \mbox{for some $a_{*} >0$}.
\ee
Then Allaire \cite{ALL-NS1, ALL-NS2} considered general cases and showed that the homogenized equations are determined by the ratio $\s_{\e}$ between the size and the mutual distance of the holes:
\be\label{ratio}
\s_{\e}: =  \left(\frac{\e^{d}}{a_{\e}^{d-2}}\right)^{\frac{1}{2}}, \quad  d\geq 3;\qquad
\s_{\e} : =  \e\left|\log \frac{a_{\e}}{\e}\right|^{\frac 12}, \quad d = 2.
\ee
Specifically, if $\lim_{\e\to 0} \s_{\e} = 0$ corresponding to the case of {\em large holes}, the homogenized system is the Darcy's law; if $\lim_{\e\to 0} \s_{\e}  = \infty$ corresponding to the case of {\em small holes}, the motion of the fluid does not change much in the homogenization process and in the limit there arise the same Stokes equations in homogeneous domains; if $\lim_{\e\to 0} \s_{\e}  = \s_{*} \in (0,+\infty)$ corresponding to the case of \emph{critical size of holes}, the homogenized equations are governed by the Brinkman's law --- a combination of the Darcy's law and the original Stokes equations.

\medskip

To obtain the limit system, a natural way is to pass $\e\to 0$ in the weak formulation of \eqref{Stokes-Oe}. In this process, one needs to pay special attention to the choice of test functions. Since the homogenized system is defined in $\Omega$,  so one needs to choose test functions in $C_c^\infty(\Omega)$. However, $C_c^\infty(\Omega)$ functions are \emph{not} proper test functions for the original system \eqref{Stokes-Oe} defined in $\Omega_{\e}$ where the test functions should be chosen in $C_c^\infty(\Omega_{\e})$. Hence, a proper surgery on the test functions need to be done and this surgery plays a crucial role in the study of the homogenization problems in fluid mechanics.  Tartar \cite{Tartar80} and Allaire \cite{ALL-NS1,ALL-NS2} used different ideas to this issue. This will be explained with more details in the next section.

\medskip

Later, the homogenization study is extended to more complicated models describing fluid flows:  Mikeli\'{c} \cite{Mik} for the incompressible Navier-Stokes equations, Masmoudi \cite{Mas-Hom} for the compressible Navier-Stokes equations,  Feireisl, Novotn\'y and Takahashi \cite{FNT-Hom} for the complete Navier-Stokes-Fourier equations. In all these studies, only the case where the size of holes is proportional to the mutual distance of the holes (like \eqref{case-0}) is considered and the Darcy's law is recovered in the limit.

Recently, cases with different sizes of holes are also considered: Feireisl, Namlyeyeva and Ne{\v c}asov{\' a} \cite{FeNaNe} studied the case with critical size of holes for the incompressible Navier-Stokes equations and they derived Brinkman's law;  in \cite{FL1, DFL, Lu-Schwarz18} the authors considered the case of small holes for the compressible Navier-Stokes equations and it is shown that the homogenized equations remain the same as the original ones. These results coincide with Allaire's study for the Stokes equations in \cite{ALL-NS1, ALL-NS2}.


\subsection{A brief review of Tartar's idea and Allaire's idea}
As pointed out in the introduction, to obtain the limit system by passing $\e\to 0$ in the weak formulation of the Stokes equations, a proper surgery on $C_{c}^{\infty}(\Omega)$  test functions needs to be done such that the test functions vanish on the holes and then become {\em good} test functions for the original Stokes equations in $\Omega_{\e}$. To this issue,  Tartar \cite{Tartar80} and Allaire \cite{ALL-NS1,ALL-NS2} used different methods.

\medskip

In \cite{Tartar80}, Tartar considered the case where the size of the holes is proportional to the mutual distance of the holes, see \eqref{case-0}. Near each single hole in $\e Q_{k}$ in the perforated domain $\Omega_{\e}$, after a scaling of size $\e^{-1}$, there arises typically the following problem, named \emph{cell problem}:
\be\label{pb-cell-Tartar}
\left\{\begin{aligned}
-\Delta w^i + \nabla q^i &=  e^i,\ &&\mbox{in}\ Q_0 \setminus  T : = \left(-\frac{1}{2},\frac{1}{2}\right)^d \setminus T,\\
\dive w^i &=0 ,\ &&\mbox{in}\ Q_0 \setminus  T,\\
w^i &=0,\ &&\mbox{on} ~  T, \\
 (w^i,q^i) & \  \mbox{is $Q_0$-periodic.}
\end{aligned}\right.
\ee
 Here $\{e^i\}_{i=1,\cdots,d}$ is the standard Euclidean coordinate of $\R^d$. The cell problem \eqref{pb-cell-Tartar} admits a unique weak solution $(w^{i},q^{i})\in W^{1,2}(Q_{0}\setminus T) \times L^{2}_{0}(Q_{0}\setminus T)$. Then by scaling back to the original scale of the perforated domain, the \emph{scaled cell solution} $(w^i_\e, q^i_\e)$ defined as
\ba\label{cell-w-i-e}
w^i_\e(\cdot) := w^i \big(\frac{\cdot}{\e}\big),\quad q^i_\e(\cdot) := q^i\big(\frac{\cdot}{\e}\big)
\ea
solves
\be\label{pb-cell-e}
\left\{\begin{aligned}
- \e^2\Delta w^i_\e + \e \nabla q^i_\e &=  e^i,\ &&\mbox{in}\ \e Q_0 \setminus  \e T ,\\
\dive w^i_\e &=0 ,\ &&\mbox{in}\ \e Q_0 \setminus  \e T,\\
w^i_\e &=0,\ &&\mbox{on} ~ \e  T,\\
 (w^i_\e, q^i_\e ) & \  \mbox{is $\e Q_0$-periodic.}
\end{aligned}\right.
\ee
Clearly $w^i_\e$ vanishes on the holes in $\Omega_\e$. Given each scalar function $\phi \in C_c^\infty(\Omega)$, $w_\e^i \phi$ is a good test function to \eqref{Stokes-Oe}. Then choosing $w_\e^i \phi$ as a test function in the weak formulation of \eqref{Stokes-Oe}, and passing $\e\to 0$, together with the property of $w_\e^i$ and the optimal uniform estimates for $\uu_\e$ and $p_\e$, gives the limit model---Darcy's law.  In this paper, we will generalize Tartar's idea so that we can cover different sizes of holes as Allaire.  So we mainly focus on the case where $a_{\e}$ is much smaller than $\e$ such that $ \eta:=\frac{a_{\e}}{\e} \to 0$ as $\e\to 0$.

\medskip

In \cite{ALL-NS1,ALL-NS2}, Allaire used an abstract framework of hypotheses on the holes and verified the hypotheses in the case of a periodic distribution of the holes. This idea goes back to \cite{Cioranescu-Murat} for Laplacian operator instead of Stokes. For general cases where $a_{\e}$ is much smaller than $\e$ such that $\eta : = \frac{a_{\e}}{\e} \to 0$ as $\e\to 0$, near each singular hole, after a scaling of size $a_{\e}^{-1}$ such that the size of the holes becomes $O(1)$, one obtains a domain of the type
$
\eta^{-1} Q_0 \setminus T
$
which converges to $\R^d \setminus T$ as $\e\to 0$.  Allaire employed the following problem of Stokes equations in exterior domain $\R^d \setminus T$, namely {\em  local problem}:
\be\label{pb-local}
\left\{\begin{aligned}
-\Delta v^i + \nabla p^i &=  0,\ &&\mbox{in}\ \R^d \setminus  T,\\
\dive v^i &=0 ,\ &&\mbox{in}\ \R^d \setminus  T,\\
v^i &=0,\ &&\mbox{on} \ T,\\
v^i &=e^i,\ &&\mbox{at infinity},
\end{aligned}\right.
\ee
to construct a family of functions $(v_\e^i, p_\e^i) \in W^{1,2}(\Omega_{\e})\times L^{2}(\Omega_{\e})$ which vanish on the holes in order to modify the $C_{c}^{\infty}(\Omega)$ test functions and derive the limit equations as $\e\to 0$.  Allaire showed that the Dirichlet problem \eqref{pb-local} is well-posed in $D^{1,2}(\R^{d}\setminus T;\R^{d}) \times L^{2}(\R^{d}\setminus T;\R^{d})$ and showed decay estimates of the solutions at infinity.  Here $D^{1,2}$ denotes the homogeneous Sobolev spaces. The corresponding $(v_\e^i, p_\e^i)$ is defined as follows: in cubes $\e Q_{k}$ that intersect with the boundary of $\Omega$,
\ba\label{def-vi-pi-1}
v_{\e}^{i} = e^{i}, \ p_{\e}^{i} = 0, \ \mbox{in $\e Q_{k} \cap \Omega$}, \quad \mbox{if} \ \e Q_{k} \cap \d \Omega \neq \emptyset;
\ea
and in cubes $\e Q_{k}$ whose closures are contained in $\Omega$,
\ba\label{def-vi-pi-2}
&v_{\e}^{i} = e^{i}, \quad  p_{\e}^{i} = 0, \ &&\mbox{in $\e Q_{k} \setminus  B(\e x_{k}, \de_{3} \e)$}, \\
& -\Delta v_{\e}^{i} +  \nabla p_{\e}^{i} = 0, \quad \dive v_{\e}^{i} = 0,\ &&\mbox{in $ B(\e x_{k}, \de_{3} \e) \setminus  B(\e x_{k}, \de_{2} \e)$}, \\
&  v_{\e}^{i} (x) =  v^{i} \big(\frac{x}{a_{\e}}\big), \quad p_{\e}^{i} (x) = \frac{1}{a_{\e}} p^{i}\big (\frac{x}{a_{\e}}\big),\ &&\mbox{in $ B(\e x_{k}, \de_{2} \e) \setminus  T_{\e,k}$}, \\
&v_{\e}^{i} = 0, \quad  p_{\e}^{i} = 0, \ &&\mbox{in $T_{\e,k}$}.
\ea
Such $(v_\e^i, p_\e^i) \in W^{1,2}(\Omega_{\e};\R^d)\times L^{2}(\Omega_{\e})$ fulfills the hypotheses in Allaire's abstract framework. In particular, $(v_\e^i, p_\e^i) $ vanishes on the holes. Thus, for each $\phi\in C_{c}^{\infty}(\Omega)$, the modified function $v_{\e}^{i} \phi$ becomes a good test function in the weak formulation of the original Stokes equations in $\Omega_{\e}$. By careful analysis, passing $\e \to 0$ gives the desired homogenized systems.

\subsection{Main result}

Tartar employed the cell problem \eqref{pb-cell-Tartar}--\eqref{pb-cell-e} to modify the test function, while he only covered the case $a_{\e}  = a_{*}  \e$ for some $a_{*}$ independent of $\e$.  Allaire employed the local problem \eqref{pb-local}--\eqref{def-vi-pi-2} and covered general sizes of holes. We found that Tartar'a idea evolving the cell problem \eqref{pb-cell-Tartar} could be more applicable when we impose soft restrictions on the distribution of the holes, which is the main topic in the forthcoming study \cite{Jin-Lu-Prange09}. Unfortunately, Tartar's method works only for a specific case. To cover the cases with general sizes of holes, a proper generalization needs to be done. Indeed, by introducing a generalized cell problem and establishing suitable estimates, we make it work for different sizes of holes. This gives a new proof of Allaire's homogenization results in \cite{ALL-NS1, ALL-NS2} by a unified approach. Along with others, such an idea of unified approach is also used recently in \cite{Jin19} for the study of Laplace equations in perforated domains. 

Before stating the theorem (see also in \cite{ALL-NS1, ALL-NS2}), we recall the extension of $(\uu_\e, p_{\e})\in W^{1,2}_{0}(\Omega_{\e};\R^{d}) \times L_{0}^{2}(\Omega_{\e})$ which is the unique solution to \eqref{Stokes-Oe} in $\Omega_{\e}$. For the velocity, since it has zero trace on the boundary, it is nature to use its zero extension defined as:
\ba\label{def-extension-u}
\tilde \uu_{\e} =\uu_{\e} \ \mbox{in}\ \Omega_\e; \quad \tilde \uu_{\e} =  0  \ \mbox{on}\ \Omega\setminus \Omega_\e, 
\ea
which satisfies
\ba\label{pt-tilde-u}
\tilde \uu_{\e} \in W^{1,2}_{0}(\Omega;\R^{d}), \quad \|\tilde \uu_{\e}\|_{L^{2}(\Omega)} = \| \uu_{\e}\|_{L^{2}(\Omega_{\e})} , \quad  \|\nabla \tilde \uu_{\e}\|_{L^{2}(\Omega)}  = \|\nabla \uu_{\e}\|_{L^{2}(\Omega_\e)}.
\ea


The extension of the pressure is more delicate and is defined by employing the so-called {\em restriction operator} due to Allaire \cite{ALL-NS1, ALL-NS2} for general sizes of holes, and due to Tartar \cite{Tartar80} for the case where the size of the holes is proportional to their mutual distance. For $\Omega_{\e}$ defined through \eqref{def-holes1} and \eqref{def-Oe}, there exists a linear operator, named {\em restriction operator}, $R_{\e} : W^{1,2}_{0}(\Omega;\R^{d}) \to W^{1,2}_{0}(\Omega_{\e};\R^{d}) $ such that:
\ba\label{pt-res}
&\uu \in W_0^{1,2}(\Omega_\e;\R^d) \Longrightarrow R_\e (\tilde \uu)=\uu \ \mbox{in}\ \Omega_\e,\ \mbox{where} \ \tilde
\uu:=\begin{cases}\uu \ &\mbox{in}\ \Omega_\e,\\ 0  \ &\mbox{on}\ \Omega\setminus \Omega_\e, \end{cases}\\
&\uu \in W_0^{1,2}(\Omega;\R^d),\  \dive \uu =0 \ \mbox{in} \ \Omega \Longrightarrow \dive R_\e (\uu) =0 \ \mbox{in} \ \Omega_\e,\\
&\uu \in W_0^{1,2}(\Omega;\R^d)\Longrightarrow \|\nabla R_\e(\uu)\|_{L^2(\Omega_\e)} \leq C \, \big( \|\nabla \uu\|_{L^2(\Omega)} + (1+\s_{\e}^{-1}) \|\uu\|_{L^2(\Omega)}\big).
\ea
Then the extension $\tilde p_\e\in L^{2}_{0}(\Omega)$ is defined through the following dual formulation:
\ba\label{def-extension-p-large}
\left<\nabla \tilde p_\e, \varphi \right>_{\Omega} = \left<\nabla  p_\e, R_\e(\varphi) \right>_{\Omega_{\e}}, \quad \forall \ \varphi \in W^{1,2}_0(\Omega;\R^d).
\ea
The above formulation \eqref{def-extension-p-large} is well defined due to the three properties in \eqref{pt-res}; moreover $\nabla \tilde p_\e \in W^{-1,2}(\Omega_\e;\R^d)$ and up to a constant, $\tilde p_\e \in L_0^2(\Omega_\e)$; in particular, $\tilde p_{\e} = p_{\e}$ in $\Omega_{\e}$.  Indeed, by property 2 of \eqref{pt-res}, one has $\dive R_{\e} (\varphi)  = 0$ for each $\varphi \in W^{1,2}_0(\Omega;\R^d)$ with $\dive \varphi = 0$, then one deduces naturally from \eqref{def-extension-p-large} that
$\left<\nabla \tilde p_\e, \varphi \right> = \left<\nabla  p_\e, R_\e(\varphi) \right> = 0.$ For each $f\in L^{2}(\Omega_{\e})$, we employ the Bogovskii operator ${\mathcal B}_{\Omega_{\e}}: L^{2}({\Omega_{\e}}) \to W^{1,2}_{0}({\Omega_{\e}};\R^{d})$ and introduce
$$
\varphi  := {\mathcal B}_{\Omega_{\e}} (f - \left< f\right>)  \in W^{1,2}_{0}(\Omega_{\e};\R^{d}) \ \mbox{with} \ \left< f\right>: = \frac{1}{|\Omega_{\e}|} \int_{\Omega_{\e}} f \,\dx
$$
such that
$
\dive \varphi  = f - \left< f\right>.
$
Let $\tilde \varphi, \tilde f$ be the zero extension of $\varphi, f$. Since $p_{\e}$ and $\tilde p_{\e}$ are both of mean zero, together with property 1 of \eqref{pt-res}, one has
\ba
\int_{\Omega_{\e}} \tilde p_{\e} f\,\dx &= \int_{\Omega} \tilde p_{\e} \tilde f\,\dx  = \int_{\Omega} \tilde p_\e  (\tilde f - \langle f \rangle )\,\dx  = \int_{\Omega} \tilde p_\e  \dive \tilde \varphi\,\dx  =  \left<\nabla \tilde p_\e, \tilde \varphi \right>_{\Omega} \\
 & = \left<\nabla  p_\e, R_\e(\tilde \varphi) \right>_{\Omega_{\e}} = \left<\nabla  p_\e,  \varphi \right>_{\Omega_{\e}}= \left< p_\e, \dive \varphi \right>_{\Omega_{\e}} = \int_{\Omega_{\e}}  p_\e  ( f - \langle f \rangle )\,\dx = \int_{\Omega_{\e}}  p_{\e} f\,\dx.
\nn
\ea
This holds for all $f\in L^{2}(\Omega_{\e})$ and therefore $\tilde p_{\e}= p_{\e}$ in $\Omega_{\e}$.

\medskip

We now state the theorem:
\begin{theorem}\label{thm} For each $\e>0$ small, let $(\uu_\e, p_{\e})\in W^{1,2}_{0}(\Omega_{\e};\R^{d}) \times L_{0}^{2}(\Omega_{\e})$ be the unique solution to the Dirichlet problem of Stokes equations \eqref{Stokes-Oe} in $\Omega_{\e}$.  Let $(\tilde \uu_{\e}, \tilde p_{\e})$ be their extension in $\Omega$ defined through  \eqref{def-extension-u}--\eqref{def-extension-p-large}.   Then we have the following description of the limit system related to different sizes of holes:
\begin{itemize}

\item[(i)] If $\lim_{\e\to 0} \s_{\e} = \infty$ corresponding to the case of small holes, then
$$
(\tilde \uu_{\e}, \tilde p_{\e}) \to (\uu,p) \ \mbox{strongly in} \ W^{1,2}_{0}(\Omega;\R^{d}) \times L_{0}^{2}(\Omega),
$$
 where $(\uu, p)$ is the unique (weak) solution to the Stokes equations:
 \be\label{Stokes-O-case1}
\left\{\begin{aligned}
-\Delta \uu +\nabla p &= \ff,\quad &&\mbox{in}~\Omega,\\
\dive \uu &=0 ,\quad &&\mbox{in}~\Omega,\\
\uu & = 0,\quad &&\mbox{on} ~\d\Omega.
\end{aligned}\right.
\ee

\item[(ii)] If $\lim_{\e\to 0} \s_{\e} = 0$ corresponding to the case of large holes, then
$$
\frac{\tilde \uu_{\e}}{\s_{\e}^{2}} \to \uu \ \mbox{weakly in} \ L^{2}(\Omega;\R^{d}), \quad  \tilde p_{\e} \to  p \ \mbox{strongly in} \  L_{0}^{2}(\Omega),
$$
 where $(\uu, p)$ satisfies the Darcy's law:
 \be\label{Darcy-O-case2}
\left\{\begin{aligned}
\uu &= A(\ff - \nabla p),\quad &&\mbox{in}~\Omega,\\
\dive \uu &=0 ,\quad &&\mbox{in}~\Omega,\\
\uu\cdot \nnn & = 0,\quad &&\mbox{on} ~\d\Omega,
\end{aligned}\right.
\ee
where $\nnn$ is the unit normal vector on the boundary of $\Omega$.

\item[(iii)] If $\lim_{\e\to 0} \s_{\e} = \s_{*} \in (0,+\infty)$ corresponding to the case of critical size of holes, then
$$
(\tilde \uu_{\e}, \tilde p_{\e}) \to (\uu,p) \ \mbox{weakly in} \ W^{1,2}_{0}(\Omega;\R^{d}) \times L_{0}^{2}(\Omega),
$$
 where $(\uu, p)$ is the unique (weak) solution to the system of Brinkman's law:
 \be\label{Brinkman-O-case2}
\left\{\begin{aligned}
-\Delta \uu +\nabla p  + \s_{*}^{-2} A^{-1} \uu &= \ff,\quad &&\mbox{in}~\Omega,\\
\dive \uu &=0 ,\quad &&\mbox{in}~\Omega,\\
\uu & = 0,\quad &&\mbox{on} ~\d\Omega.
\end{aligned}\right.
\ee

\end{itemize}

Here in \eqref{Darcy-O-case2} and \eqref{Brinkman-O-case2}, $A$ is a constant positive definite matrix given later in \eqref{A-eta-def-2}. In particular, $A$ is solely determined by the model hole $T$.
\end{theorem}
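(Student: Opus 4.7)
My plan is to follow Tartar's strategy but replace the fixed cell problem \eqref{pb-cell-Tartar} by a family indexed by the dimensionless ratio $\eta = a_\e/\e$. Specifically, I would seek $Q_0$-periodic $(w^i_\eta, q^i_\eta)$ solving
\begin{equation*}
-\Delta w^i_\eta + \nabla q^i_\eta = \lambda_\eta\, e^i,\qquad \dive w^i_\eta = 0 \text{ in } Q_0\setminus \eta T,\qquad w^i_\eta = 0 \text{ on } \eta T,
\end{equation*}
with $\lambda_\eta = \s_\e^2/\e^2$ (so that $\lambda_\eta \eta^{d-2} \to c_T$ in dimension $d \ge 3$ and analogously in $d=2$ with the logarithmic factor). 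Well-posedness follows from the Lax-Milgram theorem on the subspace of periodic, divergence-free, mean-zero functions vanishing on $\eta T$. Rescaling by $y = x/\eta$, the problem becomes one on $\eta^{-1}Q_0 \setminus T$ converging as $\eta \to 0$ to Allaire's local problem \eqref{pb-local} on $\R^d \setminus T$, so the cell matrix $A^{-1}_{ij} := \lim_{\eta\to 0} \lambda_\eta \int_{Q_0\setminus\eta T} w^i_\eta \cdot e^j\,\dx$ is well defined and coincides with the matrix produced by Allaire (hence is positive definite and depends only on $T$). Scaling back via $w^i_\e(x) := w^i_\eta(x/\e)$, $q^i_\e(x) := \e^{-1} q^i_\eta(x/\e)$ one verifies $w^i_\e \rightharpoonup e^i$ weakly in $L^2(\Omega)$, $\|\nabla w^i_\e\|_{L^2(\Omega)} = O(\s_\e/\e)$, and $\|q^i_\e\|_{L^2_0} = O(\s_\e^2)$.

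\textbf{Uniform estimates and modified test functions.} Energy estimates on \eqref{Stokes-Oe} together with the $\s_\e$-weighted Poincar\'e inequality in $\Omega_\e$ yield $\|\nabla \tilde \uu_\e\|_{L^2(\Omega)} \le C\s_\e$ and $\|\tilde \uu_\e\|_{L^2(\Omega)} \le C\s_\e^2$, while the dual definition \eqref{def-extension-p-large} combined with the third property in \eqref{pt-res} gives $\|\tilde p_\e\|_{L^2_0(\Omega)} \le C$. Up to subsequences we then extract weak limits $(\uu, p)$ at the appropriate rescaled level. To pass to the limit in the weak formulation, I would use $\varphi_\e := w^i_\e \phi_i + (\text{Bogovskii correction})$ as test function, with $\phi \in C_c^\infty(\Omega;\R^d)$ and the correction localized near $\partial\Omega$ and near the non-periodic cells $\e Q_k \not\subset \Omega$ to ensure $\dive \varphi_\e = 0$ in $\Omega_\e$ and $\varphi_\e = 0$ on $\partial\Omega_\e$. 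The correction is $O(\s_\e\,\|\phi\|_{C^1})$ in $W^{1,2}_0$ so it contributes harmlessly.

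\textbf{Passage to the limit and obstacles.} Integrating by parts twice in $\int_{\Omega_\e} \nabla \uu_\e : \nabla \varphi_\e$ and using the cell equation for $w^i_\e$ transforms the viscous term into $\lambda_\eta \int_\Omega \tilde \uu_\e \cdot e^i \, \phi_i \,\dx$ plus commutator terms involving $\nabla \phi$ and $q^i_\e$. In the three regimes this balances as follows: when $\s_\e \to \infty$, the factor $\lambda_\eta \to 0$ relative to the viscous term so the limit recovers \eqref{Stokes-O-case1}; when $\s_\e \to 0$, after dividing by $\s_\e^2$, the cell term dominates and yields $A^{-1}\uu = \ff - \nabla p$, i.e.\ \eqref{Darcy-O-case2}; at the critical scale $\s_\e \to \s_*$, the cell and viscous terms are comparable, giving \eqref{Brinkman-O-case2}. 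The commutator involving $q^i_\e \nabla \phi \cdot \tilde\uu_\e$ vanishes in the limit by the Div-Curl lemma (since $\dive \tilde \uu_\e = 0$ in $\Omega$ and $q^i_\e$ is bounded in $L^2$). The main obstacle I anticipate is the identification of the limit $\lambda_\eta \int \tilde \uu_\e \cdot w^j_\e \phi_i \,\dx \to A^{-1}_{ij}\int \uu \cdot \phi$, which requires combining the weak convergence $w^i_\e \rightharpoonup e^i$ with a compensated-compactness argument exploiting $\dive \tilde\uu_\e = 0$; a secondary obstacle is controlling the partial cells $\e Q_k$ touching $\partial\Omega$ and proving that the boundary Bogovskii correction is truly negligible uniformly in $\e$, so that the cutoff does not spoil the test function's divergence-free structure.
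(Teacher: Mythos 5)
Your overall strategy is the paper's: a Tartar-type cell problem on $Q_0\setminus \eta T$ with an $\eta$-dependent forcing, identification of the limiting cell matrix with Allaire's permeability, rescaling to $\Omega_\e$, and testing \eqref{Stokes-Oe} with $w^i_{\e}\phi$. However, the normalization of the forcing is inverted, and this breaks the quantitative core of the argument. The scale of the cell solution is set by the drag of the shrinking hole: a far-field velocity $U$ produces a force of order $\eta^{d-2}U$ (capacity scaling), so balancing against a volumetric forcing $\lambda_\eta e^i$ gives $U\sim \lambda_\eta\,\eta^{-(d-2)}$. To keep $w^i_\eta$ bounded one must take $\lambda_\eta=c_\eta^2:=\eta^{d-2}$ for $d\geq 3$ (and $|\log\eta|^{-1}$ for $d=2$), i.e.\ $\lambda_\eta=\e^2/\s_\e^2$ --- the \emph{reciprocal} of your $\s_\e^2/\e^2$. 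With your choice the cell solutions blow up like $c_\eta^{-4}$ in $L^2$, so none of your claimed asymptotics ($w^i_\e\rightharpoonup e^i$, $\|\nabla w^i_\e\|_{L^2}=O(\s_\e/\e)$, $\|q^i_\e\|_{L^2}=O(\s_\e^2)$) can hold. The correct bounds are $\|w^i_\eta\|_{L^2}\leq C$, $\|\nabla w^i_\eta\|_{L^2}\leq Cc_\eta$, $\|q^i_\eta\|_{L^2}\leq Cc_\eta$, and obtaining them requires two lemmas your proposal does not supply and where the real work lies: a Poincar\'e inequality on $Q_0\setminus\eta T$ with the sharp constant $c_\eta^{-1}$, and a Bogovskii-type operator on $Q_0\setminus\eta T$ with norm bounded uniformly in $\eta$.

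Even after fixing the normalization, the weak $L^2$ limit of $w^i_{\eta,\e}$ is not $e^i$ but the constant vector $\bar w^i$ with $(\bar w^i)_j=A_{ji}$, $A=M^{-1}$: a volumetrically forced cell flow equilibrates at the Darcy velocity $Ae^i$, whereas a limit $e^i$ is the signature of Allaire's construction \eqref{def-vi-pi-2}, where $v^i\equiv e^i$ is imposed outside a small ball. Accordingly the matrix enters the limit equations as $A_{ij}=\lim_{\eta\to 0}\int_{Q_\eta}(w^j_\eta)_i\,\dx$, not as $\lim_\eta \lambda_\eta\int w^i_\eta\cdot e^j\,\dx$ (which tends to $0$ with the correct $\lambda_\eta$). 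Since the limit is a constant and $w^i_\eta\to\bar w^i$ strongly in $L^2(Q_0)$, the rescaled functions converge \emph{strongly} in $L^2(\Omega)$, so the div-curl/compensated-compactness step you anticipate is not needed; the delicate term is instead $\int_\Omega\nabla\tilde\uu_\e:\nabla w^i_{\eta,\e}\,\phi\,\dx$, which one resolves by integrating by parts against the cell equation and using $\e^{-2}c_\eta^2=\s_\e^{-2}$ to produce the zeroth-order Darcy/Brinkman term. Finally, your proposal stops at weak limits and therefore does not prove the strong convergences asserted in the theorem: $\tilde\uu_\e\to\uu$ in $W^{1,2}_0(\Omega)$ in case (i) (via the energy identity $\lim_{\e\to0}\|\nabla\tilde\uu_\e\|_{L^2}^2=\int_\Omega\uu\cdot\ff\,\dx=\|\nabla\uu\|_{L^2}^2$) and $\tilde p_\e\to p$ strongly in $L^2$ in cases (i)--(ii) (via the decomposition $\tilde p_\e=\tilde p_\e^{(1)}+\s_\e\tilde p_\e^{(2)}$ with $\tilde p_\e^{(1)}$ bounded in $W^{1,2}$).
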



\section{Proof of Theorem \ref{thm}}
In this section, we will introduce a generalized cell problem based on the idea of Tartar \cite{Tartar80} and then give a new proof of Theorem \ref{thm} by a unified approach. Throughout the paper, we use $C$ to denote a positive constant independent of $\e$.

\subsection{Uniform estimates for $(\tilde \uu_\e, \tilde p_\e)$} \label{sec:convergence}
 We recall the estimates for $(\tilde \uu_{\e}, \tilde p_{\e})$ that have been shown in Allaire \cite{ALL-NS1, ALL-NS2}. Direct energy estimate and the properties of the restriction operator gives
 \ba\label{est-ue-pe-1}
\|\tilde \uu_\e \|_{W^{1,2}_{0}(\Omega)} \leq C, \quad \|\tilde p_\e \|_{L^{2}_{0}(\Omega)} \leq C.
\ea
Then, up to a subsequence, as $\e \to 0$:
\ba\label{lim-ue-pe-1}
\tilde \uu_\e \to \uu \ \mbox{weakly in} \ W^{1,2}_0(\Omega); \quad\tilde \uu_\e \to \uu \ \mbox{strongly in} \ L^{2}(\Omega); \quad  \tilde p_\e \to p  \ \mbox{weakly in}  \ L^2(\Omega).
\ea
The divergence free condition $\dive \uu = 0$ follows from $\dive \uu_{\e} = 0$.

\medskip

 In perforated domains, one can benefit from the zero boundary condition on the holes and obtain the following perforation version of Poincar\'e inequality (see Lemma 3.4.1 in \cite{ALL-NS2}):
 \ba\label{Poincare-Oe}
\| u\|_{L^{2}(\Omega_{\e})} \leq C \min\{1,\s_{\e}\} \|\nabla u\|_{L^{2}(\Omega_{\e})}, \quad \mbox{for each $u\in W^{1,2}_{0}(\Omega_{\e})$}.
\ea
Then for the case of large holes with $\lim_{\e\to 0} \s_{\e} = 0$, the above estimate constant in \eqref{Poincare-Oe} becomes $\s_{\e}$. By \eqref{Poincare-Oe},  direct energy estimate and the properties of the restriction operator gives
\ba\label{est-ue-pe-2-1}
\| \nabla \tilde \uu_\e \|_{L^2(\Omega)} \leq C \s_{\e},\quad \| \tilde \uu_\e \|_{L^2(\Omega)} \leq C \s_{\e}^{2},
\ea
\ba\label{est-ue-pe-2-2}
\tilde p_\e = \tilde p_\e^{(1)} +  \s_{\e} \tilde p_\e^{(2)} \ \mbox{with} \ \|\tilde p_\e^{(1)} \|_{W^{1,2}(\Omega)} + \|\tilde p_\e^{(2)} \|_{L^2(\Omega)} \leq C.
\ea
Then, up to a subsequence, as $\e \to 0$:
\ba\label{lim-ue-pe-2}
\frac{\tilde \uu_\e}{\s_{\e}^{2}} \to \uu \ \mbox{weakly in} \ L^{2}(\Omega), \quad  \tilde p_\e \to p  \ \mbox{strongly in} \ L^2(\Omega).
\ea
Since $\tilde \uu_{\e} \in W^{1,2}_{0}(\Omega)$ and $\dive\tilde \uu_\e = 0$, there holds  $\dive \uu = 0$ and $\uu \cdot \nnn = 0$ on $\d\Omega$.

\subsection{The generalized cell problem}

Near each single hole, after a scaling of size $\e^{-1}$ such that the controlling cube becomes of size $O(1)$, one obtains a domain of the form $Q_0\setminus (\eta T)$ with $\eta: = \frac{a_{\e}}{\e}$. Without loss of generality we may assume $0<\eta<1$. We then consider the following \emph{modified cell problem}:
 \be\label{pb-cell}
\left\{\begin{aligned}
-\Delta w^i_\eta + \nabla q_\eta^i &=  c_{\eta}^{2} e^i,\ &&\mbox{in}\ Q_\eta := Q_0 \setminus (\eta T),\\
\dive w_\eta^i &=0 ,\ &&\mbox{in}\ Q_\eta,\\
w_\eta^i &=0,\ &&\mbox{on} ~ \eta T,\\
 (w_\eta^i,q_\eta^i)  &\  \mbox{is $Q_0$-periodic.}
\end{aligned}\right.
\ee
Again $\{e^i\}_{i=1,\cdots,d}$ is the standard Euclidean coordinate of $\R^d$; $c_{\eta}$ is defined as
\be\label{def-c-eta}
c_{\eta} := |\log \eta|^{-\frac{1}{2}}, \quad \mbox{if $d=2$}; \quad c_{\eta} := \eta^{\frac{d-2}{2}}, \quad \mbox{if $d\geq 3$}.
\ee
Clearly $c_{\eta} \to 0$ when $\eta\to 0$. When $a_{\e}$ is proportional to $\e$, $\eta$ becomes a positive constant independent of $\e$ and $Q_{\eta}$ becomes a fixed domain of type $Q_0\setminus T$; this goes back to the case \eqref{pb-cell-Tartar} considered by Tartar. We focus on the general case $\eta  = \frac{a_{\e}}{\e} \to 0$ as $\e\to 0$. The cell problem \eqref{pb-cell} becomes singular: the domain admits a shrinking hole and becomes non-uniformly Lipschitz. This may cause the solutions to be unbounded, see \cite{Lu-Laplace, Lu-Stokes} for the cases with zero boundary conditions. 

To solve \eqref{pb-cell}, we introduce the periodic Sobolev spaces:
\be\label{periodic-spaces}
W^{1,2}_{p}(Q_0):=\{ u \in W^{1,2}(Q_0), \mbox{$u$ is $Q_0$-periodic}\},\quad W^{1,2}_{0,p}(Q_\eta):=\{ u \in W^{1,2}_p(Q_0), u=0 \ \mbox{on}\ \eta T\}. \nn
\ee
We then let $L_{0,p}^2(Q_\eta)$ be the collection of $L^2(Q_\eta)$ functions that are of zero average and $Q_0$-periodic.

For each fixed $\eta>0$, by classical theory (energy estimates and compactness), we can show there exists a unique weak solution $(w^i_\eta, q_\eta^i) \in   W^{1,2}_{0,p}(Q_\eta;\R^d) \times  L_{0,p}^2(Q_\eta)$ to \eqref{pb-cell} in the weak sense:
\ba\label{cell-weak-formulation-2}
& \int_{Q_\eta} w_\eta^i \cdot \nabla \phi \,\dx =0, \quad \forall \, \phi \in W^{1,2}_{0,p}(Q_\eta)\\
& \int_{Q_\eta} \nabla w_\eta^i :\nabla \varphi \,\dx   = c_{\eta}^{2} \int_{Q_\eta} \varphi \cdot e^i, \quad \forall \, \varphi \in W^{1,2}_{0,p}(Q_\eta;\R^d), \ \dive \varphi =0.
\ea

We shall deduce the explicit dependency of the norms $\|w_\eta^i\|_{W^{1,2}(Q_0)}$ and $\|q_\eta^i\|_{L^2(Q_0)}$ on $\eta$ when $\eta\to 0$. We focus on the case $\eta : = \frac{a_{\e}}{
\e} \to 0$ as $\e \to 0.$

\subsection{A Poincar\'e type inequality in $Q_\eta$}

We introduce the following lemma which gives a Poincar\'e type inequality in singular domain $Q_\eta$:
\begin{lemma}\label{lem-poincare-Qeta}
There exists a constant $C>0$ such that for all $u\in W^{1,2}_{0,p}(Q_\eta)$ there holds
\ba\label{poincare-Qeta}
&\|u\|_{L^2(Q_0)}  \leq   {C} c_{\eta}^{-1} \| \nabla u \|_{L^{2}(Q_{0})},
\ea
where $c_{\eta}$ is given in \eqref{def-c-eta}.
\end{lemma}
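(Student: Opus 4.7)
The plan is to reduce the inequality to a one-dimensional weighted Hardy computation by exploiting the inclusion $B(0,\delta_1)\subset T$ hidden in \eqref{def-holes1} (it comes from $B(\eps x_k,\delta_1 a_\eps) \subset T_{\eps,k}=\eps x_k + a_\eps T$). After rescaling by $\eta$ this becomes $B(0,\delta_1\eta)\subset\eta T$, so every $u\in W^{1,2}_{0,p}(Q_\eta)$ vanishes not only on $\eta T$ but on a small ball centered at the origin. Since $Q_0$ is star-shaped with respect to $0$, one can then integrate radially; the periodicity of $u$ plays no role.

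By a standard density argument I work with a smooth $u$ vanishing on $B(0,\delta_1\eta)$. For $x\in Q_0$ with $|x|>\delta_1\eta$ and $\omega=x/|x|$, the segment $\{s\omega:\delta_1\eta\le s\le|x|\}$ lies in $Q_0$, so the fundamental theorem of calculus together with Cauchy--Schwarz weighted by $s^{d-1}$ gives
\[
|u(x)|^2 = \left|\int_{\delta_1\eta}^{|x|} \partial_s u(s\omega)\,ds\right|^2 \le \left(\int_{\delta_1\eta}^{|x|} s^{1-d}\,ds\right)\left(\int_{\delta_1\eta}^{|x|} s^{d-1}|\partial_s u(s\omega)|^2\,ds\right).
\]
Since $|x|\le \sqrt d/2$, the first factor is bounded by $C\eta^{2-d}$ when $d\ge 3$ and by $C|\log\eta|$ when $d=2$; in either case it equals $O(c_\eta^{-2})$, by the very definition of $c_\eta$ in \eqref{def-c-eta}.

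Integrating the pointwise bound over $Q_0$ in spherical coordinates and using $\int_0^{R(\omega)} r^{d-1}\,dr\le C$ (with $R(\omega)$ the radial distance from $0$ to $\partial Q_0$, bounded by $\sqrt d/2$), I conclude
\[
\int_{Q_0} |u|^2\,dx \le Cc_\eta^{-2}\int_{S^{d-1}}\int_0^{R(\omega)} s^{d-1}|\partial_s u(s\omega)|^2\,ds\,d\omega \le Cc_\eta^{-2}\|\nabla u\|_{L^2(Q_0)}^2,
\]
which is the announced inequality. The only delicate point is the choice of the weight in Cauchy--Schwarz: it is precisely $s^{1-d}$ that makes the radial integral reproduce the singular factor $c_\eta^{-2}$, simultaneously producing the logarithm in $d=2$ and the power law in $d\ge 3$. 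This is really the elementary avatar of the fact that $c_\eta^2$ is the correct scaling of the relative capacity of $\eta T$ in $Q_0$, and it is the sole source of $\eta$-dependence in \eqref{poincare-Qeta}.
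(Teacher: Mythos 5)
Your proof is correct and takes essentially the same route as the paper: radial integration from the ball $B(0,\delta_1\eta)\subset\eta T$ followed by Cauchy--Schwarz with the weight $s^{d-1}$, which is exactly how the paper produces the capacity factor $c_\eta^{-2}$. The only cosmetic difference is that you integrate directly over the star-shaped cube $Q_0$, while the paper first extends $u$ periodically and integrates over $B(0,1)$; both variants work.
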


\begin{proof}
Let $u\in W^{1,2}_{0,p}(Q_\eta)$. We assume in addition $u\in C^{1}(\overline Q_{\eta})$.
 For general $u\in W^{1,2}_{0,p}(Q_\eta)$, the result follows from the classical density argument.

 By \eqref{def-holes1}, there holds
\be\label{pt-Qeta}
B(0,\de_1 \eta) \subset \eta T \subset B(0,\de_2 \eta) \subset B(0,\de_3) \subset Q_0 \subset B(0,1).
\ee

By $Q_{0}$ periodicity of $u$,  we have
\be\label{norm-Qeta-B}
 \| \nabla u \|^2_{L^2(Q_0)} \leq \| \nabla u \|_{L^2(B(0,1))}^2 \leq \| \nabla u \|_{L^2((-1,1)^d)}^2  = 2^{d} \| \nabla u \|^2_{L^2(Q_0)}.
\ee

For each $x\in B(0,1)\setminus (\eta T) \subset  B(0,1)\setminus B(0,\de_1 \eta)$, we denote $r_x := |x|$ and $\o_x : = \frac{x}{|x|}$. By the fact $u =0$ on $\eta T$,  we have
\ba\label{ux-Qeta}
u(x) & = u (r_x \o_x) = u (r_x \o_x) - u (\de_1 \eta \o_x) = \int_{\de_1\eta}^{r_x} \frac{\rm d}{{\rm d}s} u(s \o_x)\, {\rm d}s = \int_{\de_1\eta}^{r_x} (\nabla u) (s \o_x) \cdot \o_x \, {\rm d}s. \nn
\ea
By H\"older's inequality, direct calculation gives
\ba\label{ux-Qeta-L2}
\|u\|_{L^2(Q_0)}^2 &\leq \int_{B(0,1)\setminus B(0,\de_1 \eta)} |u(x)|^2 \,\dx = \int_{\de_1\eta}^{1} \int_{\mathbb{S}^2} |u(r_x \o_x)|^2 r_x^{d-1} \,{\rm d} \o_x \,{\rm d} r_x \\
&= \int_{\de_1\eta}^{1} \int_{\mathbb{S}^2} \left|\int_{\de_1\eta}^{r_x} (\nabla u) (s \o_x) \cdot \o_x \, {\rm d}s\right|^2   r_x^{d-1} \,{\rm d} \o_x \,{\rm d} r_x \\
& \leq \int_{\mathbb{S}^2} \int_{\de_1\eta}^{1} r_x^{d-1} \left(\int_{\de_1\eta}^{r_x} s^{-d+1}\,{\rm d}s\right) \left(\int_{\de_1\eta}^{r_x} s^{d-1} |\nabla u(s \o_x)|^2 \, {\rm d} s \right)  \,{\rm d} r_x \,{\rm d} \o_x \\
&\leq \left(\int_{\de_1\eta}^{1} r_x^{d-1} \left(\int_{\de_1\eta}^{r_x} s^{-d+1}\,{\rm d}s\right){\rm d} r_x\right) \left( \int_{\mathbb{S}^2}  \int_{\de_1\eta}^{1} s^{d-1} |\nabla u(s \o_x)|^2 \, {\rm d} s  \,{\rm d} \o_x \right)\\
& \leq C \int_{\de_1\eta}^{1} s^{-d+1}\,{\rm d}s \int_{B(0,1)} |\nabla u(x)|^2 \,\dx.
\ea
We then deduce from \eqref{ux-Qeta-L2} that
\ba\label{u-Qeta-L2-f1}
&\|u\|_{L^2(Q_0)}^2  \leq C |\log \eta |  \| \nabla u \|^2_{L^{2}(B(0,1))}, \quad  \mbox{if $d=2$},\\
&\|u\|_{L^2(Q_0)}^2  \leq C \eta^{-d+2}  \| \nabla u \|^2_{L^{2}(B(0,1))}, \quad \mbox{if $d\geq 3$}.
\ea
Combining \eqref{norm-Qeta-B} and \eqref{u-Qeta-L2-f1} implies our desired estimate \eqref{poincare-Qeta}.

\end{proof}

\subsection{A Bogovskii type operator in $Q_\eta$}

We then introduce a Bogovskii type operator in $Q_\eta$:
\begin{lemma}\label{lem-Bogovskii-Qeta}
There exists a linear mapping $\CalB_{Q_\eta}: L^2_{0,p}(Q_\eta) \to W^{1,2}_{0,p}(Q_\eta;\R^d)$ such that for each $f\in L^2_{0,p}(Q_\eta)$, there holds
\ba\label{Bogovskii-Qeta}
\dive \CalB_{Q_\eta}(f)=f \ \mbox{in}\ Q_\eta,\ \|\CalB_{Q_\eta}(f)\|_{W^{1,2}_{0,p}(Q_\eta)} \leq C \|f\|_{L^2(Q_\eta)}. \nn
\ea

\end{lemma}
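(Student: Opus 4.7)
The plan is to construct $\CalB_{Q_\eta}$ by combining the standard periodic Bogovskii operator on the full cell $Q_0$ with a local correction supported near the shrinking hole $\eta T$. Given $f\in L^2_{0,p}(Q_\eta)$, I would first extend it by zero to $\bar f\in L^2(Q_0)$; since $f$ has zero mean on $Q_\eta$, $\bar f$ is periodic and has zero mean on $Q_0$. The classical periodic Bogovskii operator on the unit cube then produces $v\in W^{1,2}_p(Q_0;\R^d)$ with $\dive v=\bar f$ and $\|v\|_{W^{1,2}(Q_0)}\leq C\|f\|_{L^2(Q_\eta)}$, with constant depending only on $Q_0$. However $v$ generally does not vanish on $\eta T$, so a correction is required.

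Next I introduce a smooth radial cutoff $\chi$ equal to $1$ on $B(0,\de_2\eta)\supset\eta T$, vanishing outside $B(0,2\de_2\eta)$, with $\|\nabla\chi\|_{L^\infty}\leq C/\eta$. Then $(1-\chi)v$ lies in $W^{1,2}_{0,p}(Q_\eta;\R^d)$ and satisfies $\dive((1-\chi)v)=f-h_\eta$ on $Q_\eta$, where $h_\eta:=\chi f+\nabla\chi\cdot v$ is supported in $A_\eta:=B(0,2\de_2\eta)\setminus\eta T$. A brief integration by parts using $\dive v=\bar f$ on $Q_0$ and $\bar f\equiv 0$ on $\eta T$ shows $\int_{A_\eta}h_\eta\,\dx=0$. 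I would then solve $\dive w=h_\eta$ on $A_\eta$ with $w\in W^{1,2}_0(A_\eta;\R^d)$ via the scaled Bogovskii operator on $A_\eta=\eta(B(0,2\de_2)\setminus T)$, extend $w$ by zero, and set $\CalB_{Q_\eta}(f):=(1-\chi)v+w$, which has the required divergence, boundary, and periodicity properties.

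The decisive step is the $\eta$-uniform bound. Rescaling $A_\eta=\eta A_1$ preserves the $L^2$-to-$W^{1,2}_0$ operator norm of Bogovskii in every dimension $d\geq 2$, so this piece contributes an $\eta$-free constant. For the source, $\|\chi f\|_{L^2}\leq\|f\|_{L^2(Q_\eta)}$ is trivial, while the delicate term $\|\nabla\chi\cdot v\|_{L^2(A_\eta)}\leq C\eta^{-1}\|v\|_{L^2(A_\eta)}$ is handled in dimension $d\geq 3$ by H\"older's inequality together with the Sobolev embedding $W^{1,2}(Q_0)\hookrightarrow L^{2d/(d-2)}(Q_0)$: since $|A_\eta|\leq C\eta^d$, one finds $\|v\|_{L^2(A_\eta)}\leq C\eta\|v\|_{W^{1,2}(Q_0)}$, precisely absorbing the $\eta^{-1}$ factor.

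The main obstacle is dimension $d=2$, where Sobolev embedding reaches only $L^p$ for finite $p$ with $p$-dependent constants, and the previous H\"older argument breaks down by a factor $\eta^{-2/p}$. To handle this I would bypass the cutoff construction and build $\CalB_{Q_\eta}$ directly on $A_\eta$ through a radial--angular decomposition of the source: the radial part reduces to an explicit one-dimensional ODE whose $W^{1,2}_0$ estimate follows from a Hardy inequality with constant depending only on the fixed aspect ratio $\de_2/\de_1$, while the angular zero-mean part admits a uniform Bogovskii estimate via the circular Poincar\'e--Wirtinger inequality. Patching with the periodic Bogovskii on $Q_0$ through a partition of unity away from $\eta T$ then yields the uniform bound in $d=2$ as well, completing the lemma.
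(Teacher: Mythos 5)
Your construction for $d\geq 3$ is correct and is essentially a cutoff variant of the paper's argument: both proofs first solve $\dive v=\bar f$ periodically on the whole cell $Q_0$ (the paper uses the Fourier multiplier $\nabla\Delta^{-1}$) and then correct near the hole --- the paper by solving a boundary-value divergence problem on $B(0,\de_2\eta)\setminus(\eta T)$ with data $v$ on the outer sphere (invoking the proof of Allaire's Lemma 2.1.4), you by cutting off and solving a zero-trace divergence problem on the annular region $A_\eta$. Your verification that this correction is uniformly bounded for $d\geq 3$ --- scale invariance of the Bogovskii norm on $A_\eta=\eta A_1$ together with $\eta^{-1}\|v\|_{L^2(A_\eta)}\leq C\|v\|_{W^{1,2}(Q_0)}$ from H\"older and the embedding $W^{1,2}\hookrightarrow L^{2d/(d-2)}$ --- is sound, and is in fact more explicit than the paper, which simply asserts the uniform estimate for the local corrector.

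The gap is in $d=2$, and the proposed repair does not close it. You correctly identify that $\eta^{-1}\|v\|_{L^2(A_\eta)}$ cannot be bounded by $C\|v\|_{W^{1,2}(Q_0)}$ in two dimensions for general $v$ (functions behaving like $|\log|x||^{1/2-\delta}$ near the origin show a genuine loss of order $|\log\eta|^{1/2}$). But the obstruction lives in the \emph{data} $h_\eta=\chi f+\nabla\chi\cdot v$ fed to the local solver, not in the local solver itself: $A_\eta$ is a rescaled copy of a fixed Lipschitz domain, so the Bogovskii operator on it is already uniformly bounded and needs no radial--angular improvement. Your $d=2$ paragraph upgrades the solver on the annulus via Hardy and Poincar\'e--Wirtinger inequalities, yet still ``patches with the periodic Bogovskii on $Q_0$ through a partition of unity away from $\eta T$''; any such partition of unity at scale $\eta$ reintroduces exactly the term $\nabla\chi\cdot v$ whose $L^2$ norm you cannot control, while a partition of unity at a fixed scale merely relocates the problem to a uniform divergence inverse on $B(0,\de_3)\setminus(\eta T)$, i.e.\ the original problem with the cube replaced by a ball. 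So as written the two-dimensional case is not proved. A route that does give uniformity for all $d\geq 2$ is to avoid the cutoff altogether: $Q_0\setminus(\eta T)$ is a John domain with John constant independent of $\eta$, so the Bogovskii operator with zero trace on the \emph{entire} boundary (whose periodic extension then lies in $W^{1,2}_{0,p}(Q_\eta;\R^d)$) is uniformly bounded by the Acosta--Dur\'an--Muschietti theory. This also makes visible that the paper's own one-line appeal to Allaire's Lemma 2.1.4 is doing real work precisely at the $d=2$ step.
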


\begin{proof}Given $f\in L^2_{0,p}(Q_\eta)$. Let $\tilde f \in L^2_{0,p}(Q_0)$ be the zero extension of $f$ in $Q_0$. Since $\tilde f$ is $Q_0$-periodic and is of zero average, we have the following expression of Fourier series:
$$
\tilde f (x) = \sum_{{\bf k}\in \Z^d\setminus \{{\bf 0}\}} f_{\bf k} e^{2\pi i {\bf k}\cdot x}, \  x\in Q_0.
$$
Here $ f_{\bf k} , \, {\bf k} \in \Z^{d}$ are the Fourier coefficients of $\tilde f$. Let
\be\label{tu-Qeta-def}\tilde u :=\nabla \Delta^{-1} f  := \sum_{{\bf k}\in \Z^d\setminus \{{\bf 0}\}} \frac{{-i \bf k}}{2\pi|{\bf k}|^2}f_{\bf k} e^{2\pi i {\bf k}\cdot x}.\nn
\ee
Then $\tilde u \in W^{1,2}_{p}(Q_{0};\R^d)$ satisfying
\be\label{tu-Qeta-pt}
\dive \tilde u = \tilde f \ \mbox{in} \ Q_0,\quad \| \tilde u \|_{W^{1,2}(Q_0)}\leq  C \|\tilde f\|_{L^2(Q_0)}.\nn
\ee

Recall \eqref{pt-Qeta} and consider the following problem in $v$ near the hole:
\be\label{pb-div-B-T-eta}
\left\{\begin{aligned}
\dive v &= \dive \tilde u = f,\ &&\mbox{in}\ B(0,\de_2 \eta)\setminus (\eta T),\\
v & =\tilde u,\ &&\mbox{on} ~ \d(B(0,\de_2 \eta)),\\
 v & =0,\ &&\mbox{on} ~ \d(\eta T).
\end{aligned}\right.
\ee
By employing the proof of Lemma 2.1.4 in Allaire \cite{ALL-NS1}, there exists a solution $v$ to \eqref{pb-div-B-T-eta} satisfying
\be\label{est-v-B-T-eta}
\|v\|_{W^{1,2}(B(0,\de_2 \eta)\setminus (\eta T))}   \leq C  \| \tilde u \|_{W^{1,2}(Q_0)}\leq  C \|\tilde f\|_{L^2(Q_0)} = C \|f\|_{L^2(Q_\eta)} . \nn
\ee

Finally, the following linear operator
\be\label{Bogovskii-Qeta-def}
\CalB_{Q_\eta} (f) : =
\left\{\begin{aligned}
& \tilde u, \  && \mbox{in} \ Q_0 \setminus B(0,\de_2 \eta),\\
& v , \  && \mbox{in} \ B(0,\de_2 \eta)\setminus (\eta T)
\end{aligned}\right.\nn
\ee
is well defined and fulfills our desired properties stated in Lemma \ref{lem-Bogovskii-Qeta}.

\end{proof}

\subsection{Estimates for $(w_\eta^i,q_\eta^i)$}

Taking $w_\eta^i$ as a test function for \eqref{pb-cell} in the weak formulation $\eqref{cell-weak-formulation-2}_{2}$ and using Lemma \ref{lem-poincare-Qeta} gives
\ba\label{est-weta-1}
& \|\nabla w_\eta^i  \|_{L^2(Q_\eta)}^2 \leq  c_{\eta}^{2} \|w_\eta^i  \|_{L^2(Q_\eta)} \leq C c_{\eta} \|\nabla w_\eta^i  \|_{L^2(Q_\eta)}.
\ea
This implies, again using Lemma \ref{lem-poincare-Qeta}, that
\ba\label{est-weta-2}
\|\nabla w_\eta^i  \|_{L^2(Q_\eta)} \leq C c_{\eta} , \quad \| w_\eta^i  \|_{L^2(Q_\eta)} \leq C.
\ea

Taking $\CalB_{Q_\eta}(q_\eta^i)$ as a test function for \eqref{pb-cell} and using Lemmas \ref{lem-poincare-Qeta} and \ref{lem-Bogovskii-Qeta} implies
\ba\label{est-qeta-1}
\|q_\eta^i  \|_{L^2(Q_\eta)}^2 & \leq   c_{\eta}^{2} \| \CalB_{Q_\eta}(q_\eta^i) \|_{L^2(Q_\eta)} + \|\nabla w_\eta^i  \|_{L^2(Q_\eta)} \|\nabla \CalB_{Q_\eta}(q_\eta^i)   \|_{L^2(Q_\eta)}.
\ea
By  Lemma \ref{lem-Bogovskii-Qeta}, \eqref{est-weta-2} and \eqref{est-qeta-1}, we get
\ba\label{est-qeta-2}
& \| q_\eta^i \|_{L^2(Q_\eta)} \leq C c_{\eta}.
\ea

\medskip

By  \eqref{est-weta-2} and compact Sobolev embedding, we have, up to a subsequence, that
\be\label{w-eta-lim}
w_\eta^i  \to  w^i  \ \mbox{weakly in} \ W^{1,2}(Q_0),\quad w_\eta^i  \to w^i  \ \mbox{strongly in} \ L^2(Q_0).
\ee

In particular, when $\eta \to 0$ as $\e \to 0$ such that $c_{\eta} \to 0$, by \eqref{est-weta-2}, there holds $\nabla w^i = 0$ meaning that the limit $w^i$ is a constant vector.

We deduce from \eqref{est-qeta-2}, up to a subsequence, that
\be\label{q-eta-lim}
c_{\eta}^{-1} q_\eta^i  \to   q^i  \ \mbox{weakly in} \ L^2(Q_{0}).
\ee

Define $A(\eta)\in M^{d\times d}$ as
\be\label{A-eta-def}
A(\eta)_{i,j} := c_{\eta}^{-2} \int_{Q_\eta} \nabla  w_\eta^i : \nabla  w_\eta^j \,\dx.\nn
\ee
Clearly $A(\eta)$ is semi-positive definite. Taking $w_\eta^j$ as a test function in \eqref{pb-cell} gives
\be\label{A-eta-def-1}
A(\eta)_{i,j} = \int_{Q_\eta}  w_\eta^j \cdot e^i \,\dx  = \int_{Q_\eta}  (w_\eta^j)_i \,\dx. \nn
\ee
By \eqref{w-eta-lim} where we have shown the weak convergence of $w^{j}_{\eta}$ in $L^{2}$ as $\eta \to 0$ up to a subsequence, we then define $A$ as the limit of $A(\eta)$:
\be\label{A-eta-def-2}
A_{i,j} = \lim_{\eta \to 0} A(\eta)_{i,j} =  \lim_{\eta \to 0} c_{\eta}^{-2} \int_{Q_\eta} \nabla  w_\eta^i : \nabla  w_\eta^j \,\dx =  \lim_{\eta \to 0} \int_{Q_\eta}  (w_\eta^j)_i \,\dx= : ( \bar w^j)_i.
\ee
We see that the matrix $A = ( \bar w^j_i)_{1\leq i,j\leq d}$ is symmetric.  Moreover, the main Theorem in \cite[Section 0]{Allaire91} says that
\be\label{A-eta-M}
\lim_{\eta \to 0} A_\eta = A =  M^{-1},
\ee
where $M$ is the permeability tensor introduced by Allaire, which is positive definite. Actually, the permeability tensor $M$ is defined by (see \cite{ALL-NS1, ALL-NS2} or \cite{Allaire91})
$$
M: = \pi \II, \ \mbox{if $d=2$};\quad M: = \Big( \frac{1}{2^d} \int_{\R^d\setminus T} \nabla v^i : \nabla v^j \,\dx \Big)_{1\leq i,j\leq d}, \ \mbox{if $d \geq 3$},
$$
where $v^{i}$ is the solution to the local problem \eqref{pb-local}. Since $M$ is uniquely determined, the convergence \eqref{A-eta-def-2} and \eqref{A-eta-M} holds for each subsequence, and then holds for the whole sequence.

{\color{blue}

}

\subsection{The scaled cell solutions}

Starting from the solution $(w^i_\eta, q_\eta^i)$ to the cell problem \eqref{pb-cell}, we define
\be\label{w-q-e-def}
w^i_{\eta,\e} (\cdot) : = w^i_{\eta} \big(\frac{\cdot}{\e}\big),\ q^i_{\eta,\e} (\cdot) : = q^i_{\eta} \big(\frac{\cdot}{\e}\big)
\ee
solving
\be\label{w-q-e-pt1}
\left\{\begin{aligned}
- \e^2 \Delta w^i_{\eta,\e} + \e \nabla q^i_{\eta,\e} & = c_{\eta}^{2} e^i,\ &&\mbox{in}\ \e Q_0 \setminus (a_\e T),\\
\dive w^i_{\eta,\e} &=0 ,\ &&\mbox{in}\ \e Q_0 \setminus (a_\e T),\\
w^i_{\eta,\e} &=0,\ &&\mbox{on}  \ a_\e T,\\
(w^i_{\eta,\e},q^i_{\eta,\e}) &\  \mbox{is $\e Q_0$-periodic}.
\end{aligned}\right.
\ee
By \eqref{est-weta-1}--\eqref{est-qeta-2}, \eqref{w-q-e-def}, direct calculation gives
\ba\label{w-q-e-pt2}
&\|w^i_{\eta,\e}\|_{L^2(\Omega)} \leq C \|w^i_{\eta}\|_{L^2(Q)} \leq C, \\
& \|q^i_{\eta,\e}\|_{L^2(\Omega)} \leq C \|q^i_{\eta}\|_{L^2(Q)} \leq C c_{\eta}, \\
&\|\nabla w^i_{\eta,\e}\|_{L^2(\Omega)} \leq C \e^{-1} \|\nabla w^i_{\eta}\|_{L^2(Q)} \leq C \e^{-1} c_{\eta} \leq C \s_{\e}^{-1},
\ea
where we observed that $ \e^{-1} c_{\eta} = \s_{\e}^{-1}$ from \eqref{ratio} and \eqref{def-c-eta}.
Thus, by the convergence we have shown in \eqref{w-eta-lim} and \eqref{q-eta-lim}, using the periodicity of $(w^i_{\eta,\e}, q^i_{\eta,\e})$, we can obtain
\be\label{w-q-e-lim}
w^i_{\eta,\e}  \to \bar w^i  \ \mbox{weakly in} \ L^{2}(\Omega),\quad c_{\eta}^{-1}q^i_{\eta,\e}  \to \bar q^i:= \int_{Q_0} q^i\,\dx  \ \mbox{weakly in} \ L^2(\Omega),
\ee
as $\e\to 0$, up to a subsequence.

\subsection{Homogenization process}

Clearly $w^i_{\eta,\e}$ vanishes on the holes in $\Omega_\e$. Given any scalar function $\phi \in C_c^\infty(\Omega)$, taking $w_{\eta,\e}^i \phi$ as a test function to \eqref{Stokes-Oe} gives
\ba\label{Stokes-Omega-periodic-weak0}
\int_{\Omega_{\e}} \nabla \uu_\e : \nabla(w_{\eta,\e}^i \phi)\,\dx - \int_{\Omega_{\e}} p_\e \, \dive(w_{\eta,\e}^i \phi)\,\dx  = \int_{\Omega_{\e}} \ff \cdot (w_{\eta,\e}^i \phi)\,\dx.
\ea
By the fact that $w_{\eta,\e}^i$ vanishes on the holes and that $(\tilde \uu_{\e}, \tilde p_{\e})$ coincides with $( \uu_{\e},  p_{\e})$ in $\Omega_{\e}$, the integral equality \eqref{Stokes-Omega-periodic-weak0}  is equivalent to
\ba\label{Stokes-Omega-periodic-weak}
\int_{\Omega} \nabla \tilde \uu_\e : \nabla(w_{\eta,\e}^i \phi)\,\dx - \int_{\Omega} \tilde p_\e \, \dive(w_{\eta,\e}^i \phi)\,\dx  = \int_{\Omega} \ff \cdot (w_{\eta,\e}^i \phi)\,\dx
\ea

We will pass $\e\to 0$ case by case in the following subsections. The limit is firstly taken up to a subsequence and we will not repeat this point.

\subsubsection{The case with small holes}

We start with the case of small holes such that $\lim_{\e \to 0} \s_{\e} \to +\infty.$ 

By \eqref{w-q-e-pt2} and \eqref{w-q-e-lim}, we have $\|\nabla w^i_{\eta,\e}\|_{L^2(\Omega)}\leq C \s_{\e}^{-1} \to 0$ as $\e\to 0$; moreover $w^i_{\eta,\e}  \to \bar w^i  \ \mbox{srtongly in} \ L^{2}(\Omega)$ by Rellich-Kondrachov compact embedding theorem. Thus, as $\e \to 0$,
\ba\label{Stokes-Omega-periodic-weak-1-1}
\int_{\Omega} \nabla \tilde \uu_\e : \nabla(w_{\eta,\e}^i \phi)\,\dx  & =  \int_{\Omega} \nabla \tilde  \uu_\e :  w_{\eta,\e}^i \otimes \nabla \phi\,\dx + \int_{\Omega} \nabla  \tilde \uu_\e : \nabla w_{\eta,\e}^i \phi\,\dx \\
& \to \int_{\Omega} \nabla \tilde  \uu :  \bar w^i \otimes \nabla \phi\,\dx = \int_{\Omega} \nabla \tilde  \uu :  \nabla(\bar w^i \phi)\,\dx ,
\ea
\ba\label{Stokes-Omega-periodic-weak-1-2}
 \int_{\Omega}\tilde  p_\e \, \dive(w_{\eta,\e}^i \phi)\,\dx   =   \int_{\Omega} \tilde p_\e \,  w_{\eta,\e}^i \cdot \nabla \phi\,\dx  \to  \int_{\Omega} p  \, \bar w^i \cdot \nabla \phi\,\dx = \int_{\Omega} p   \, \dive(\bar w^i \phi)\,\dx,
\ea
and
\ba\label{Stokes-Omega-periodic-weak-1-3}
 \int_{\Omega} \ff \cdot (w_{\eta,\e}^i \phi)\,\dx \to  \int_{\Omega} \ff \cdot \bar w^i \phi\,\dx.
\ea
Then using \eqref{Stokes-Omega-periodic-weak-1-1}--\eqref{Stokes-Omega-periodic-weak-1-3} and passing $\e\to 0$ in \eqref{Stokes-Omega-periodic-weak} implies
\ba\label{Stokes-Omega-periodic-weak-1-4}
 \int_{\Omega} \nabla \uu :  \nabla(\bar w^i \phi)\,\dx -  \int_{\Omega} p   \, \dive(\bar w^i \phi)\,\dx =  \int_{\Omega} \ff \cdot \bar w^i \phi\,\dx.\nn
\ea
This gives 
\ba\label{limit-equaton-1-weak}
\int_{\Omega} \nabla \uu : \nabla (A\varphi) - p \,\dive (A\varphi) - \ff \cdot (A\varphi) \,\dx = 0, \ \forall \, \varphi \in C_c^\infty(\Omega;\R^{d}),\nn
\ea
which means
\ba\label{limit-equaton-1}
A (-\Delta \uu + \nabla p - \ff) =0 \nn
\ea
in the weak sense. Here $A = (w^{i}_{j})_{1\leq i,j\leq d}$ is the permeability matrix defined in \eqref{A-eta-def-2} and satisfies \eqref{A-eta-M}. Since $A$ is positive definite, together with the results in \eqref{sec:convergence}, we deduce the Stokes equations in non perforated domain $\Omega$:
\ba\label{limit-equaton-1-f}
-\Delta \uu + \nabla p = \ff,  \ \dive \uu = 0 \quad \mbox{in} \ \Omega; \quad \uu = 0 \ \mbox{on} \ \d\Omega.
\ea
Since the solution $(\uu,p)\in W^{1,2}_0(\Omega;\R^d)\times L_0^2(\Omega)$ of the limit system \eqref{limit-equaton-1-f} is unique, then the limit process holds for all subsequences and then holds for the whole sequence. 

We show the strong convergence of $\tilde \uu_{\e} \to \uu$ in $W^{1,2}_{0}(\Omega;\R^{d})$. Taking $\uu_{\e}$ as a test function in the weak formulation of \eqref{Stokes-Oe}, using the property that $\tilde \uu_{\e} = \uu_{\e}$ in $\Omega_{\e}$ and the weak convergence of $\tilde \uu_{\e} \to \uu$ in $W^{1,2}_{0}(\Omega;\R^{d})$, passing $\e \to 0$ implies
\ba\label{st-conv-1}
\lim_{\e \to 0}  \| \nabla \tilde \uu_{\e}\|_{L^{2}(\Omega)}^{2}  = \int_{\Omega}  \uu \cdot \ff \,\dx.
\ea 
Taking $\uu$ as a test function to \eqref{limit-equaton-1-f} gives
\ba\label{st-conv-2}
\| \nabla \uu\|_{L^{2}(\Omega)}^{2}  = \int_{\Omega}  \uu \cdot \ff \,\dx.
\ea 
Thus $\lim_{\e \to 0}  \| \nabla \tilde \uu_{\e}\|_{L^{2}(\Omega)}  =   \| \nabla \uu\|_{L^{2}(\Omega)}$ resulting in $\nabla \tilde \uu_{\e} \to \nabla\uu$ strong in $L^{2}(\Omega)$ and finally $\tilde \uu_{\e} \to \uu$ in $W^{1,2}_{0}(\Omega;\R^{d})$.   The strong convergence $\tilde p_{\e} \to p$ in $L^{2}(\Omega)$ follows from the strong convergence $\nabla \tilde p_{\e} \to \nabla p$ in $W^{-1,2}(\Omega)$ and employing the Bogovskii operator on $\Omega$. 

\subsubsection{The case with large holes}

We then consider the case with large holes: $\lim_{\e \to 0} \s_{\e} \to 0.$  By \eqref{w-q-e-pt1}, direct calculation gives
\ba\label{Stokes-Omega-periodic-weak-2-1}
\int_{\Omega} \nabla \tilde \uu_\e : \nabla(w_{\eta,\e}^i \phi)\,\dx  & =  \int_{\Omega} \nabla \tilde  \uu_\e :  w_{\eta,\e}^i \otimes \nabla \phi\,\dx + \int_{\Omega} \nabla \tilde  \uu_\e : \nabla w_{\eta,\e}^i \phi\,\dx \\
&  =  \int_{\Omega} \nabla \tilde  \uu_\e :  w_{\eta,\e}^i \otimes \nabla \phi\,\dx + \int_{\Omega} \nabla (\phi \tilde  \uu_\e) : \nabla w_{\eta,\e}^i \,\dx - \int_{\Omega} \nabla \phi \otimes  \tilde \uu_\e : \nabla w_{\eta,\e}^i \,\dx\\
&  =  \int_{\Omega} \nabla \tilde \uu_\e :  w_{\eta,\e}^i \otimes \nabla \phi\,\dx - \int_{\Omega} \nabla \phi \otimes \tilde  \uu_\e : \nabla w_{\eta,\e}^i \,\dx \\
& \quad + \e^{-1} \int_{\Omega} \dive (\phi \tilde  \uu_\e) \, q_{\eta,\e}^i \,\dx  + \e^{-2} c_{\eta}^{2}\int_{\Omega} (\phi \tilde  \uu_\e)\cdot e^i \,\dx.
\ea
By \eqref{w-q-e-pt2}, \eqref{w-q-e-lim}, \eqref{est-ue-pe-2-1},  we have
\ba\label{Stokes-Omega-periodic-weak-2-2}
\left|\int_{\Omega} \nabla \tilde \uu_\e :  w_{\eta,\e}^i \otimes \nabla \phi\,\dx\right| & \leq C \| \nabla \tilde  \uu_\e \|_{L^2(\Omega)} \| w_{\eta,\e}^i \|_{L^2(\Omega)} \leq C \s_{\e} \to 0,\\
\left|\int_{\Omega} \nabla \phi \otimes \tilde   \uu_\e : \nabla w_{\eta,\e}^i \,\dx \right| & \leq C \|\tilde  \uu_\e \|_{L^2(\Omega)} \| \nabla w_{\eta,\e}^i \|_{L^2(\Omega)} \leq C \s_{\e}^{2} \s_{\e}^{-1}  = C \s_{\e} \to 0.\nn
\ea
Moreover, using the divergence free condition $\dive \tilde \uu_{\e} = 0$ and observing  $ \e^{-1} c_{\eta} = \s_{\e}^{-1}$ implies
\ba\label{Stokes-Omega-periodic-weak-2-4}
\left|\e^{-1} \int_{\Omega} \dive (\phi \tilde \uu_\e) \, q_{\eta,\e}^i \,\dx \right| \leq C \e^{-1}\| \tilde \uu_\e \|_{L^2(\Omega)} \| q_{\eta,\e}^i \|_{L^2(\Omega)} \leq C \e^{-1}\s_{\e}^{2} c_{\eta}  = C \s_{\e} \to 0.\nn
\ea

By \eqref{lim-ue-pe-2} and observing $\e^{-2} c_{\eta}^{2}  = \s_{\e}^{-2}$, we have
\ba\label{Stokes-Omega-periodic-weak-2-5}
 \e^{-2} c_{\eta}^{2} \int_{\Omega} (\phi \tilde \uu_\e)\cdot e^i \,\dx =  \int_{\Omega} \phi\frac{\tilde \uu_\e}{\s_\e^2} \cdot e^i \,\dx  \to \int_{\Omega} \phi \uu \cdot e^i \,\dx.\nn
\ea

For the term related to the pressure, by \eqref{est-ue-pe-2-2} and \eqref{lim-ue-pe-2}, 
\ba\label{Stokes-Omega-periodic-weak-2-6}
 \int_{\Omega}\tilde  p_\e \, \dive(w_{\eta,\e}^i \phi)\,\dx   =   \int_{\Omega} \tilde p_\e \,  w_{\eta,\e}^i \cdot \nabla \phi\,\dx  \to  \int_{\Omega} p  \, \bar w^i \cdot \nabla \phi\,\dx =  \int_{\Omega} p   \, \dive(\bar w^i \phi)\,\dx.\nn
\ea

Then passing $\e\to 0$ in \eqref{Stokes-Omega-periodic-weak} implies
\ba\label{Stokes-Omega-periodic-weak-2-f-1}
 \int_{\Omega} \phi \uu \cdot e^i \,\dx = \int_{\Omega} \ff \cdot \bar w^i \phi\,\dx + \int_{\Omega} p   \, \dive(\bar w^i \phi)\,\dx.
 \ea
Together with the results in Section \ref{sec:convergence},  from \eqref{Stokes-Omega-periodic-weak-2-f-1} we deduce the Darcy's law in $\Omega$:
 \ba\label{Stokes-Omega-periodic-weak-2-f-2}
 \uu = A(\ff - \nabla p), \ \dive \uu = 0 \quad \mbox{in} \ \Omega; \quad \uu \cdot \nnn = 0 \ \mbox{on} \ \d\Omega.
 \ea
 Since the solution $(\uu,p)\in L^2(\Omega;\R^d)\times L_0^2(\Omega)$ of the limit system \eqref{Stokes-Omega-periodic-weak-2-f-2} is uniquely determined, then the limit process holds for all subsequences and then holds for the whole sequence.

 \subsubsection{The case with critical size of holes}
 We finally consider the case $\lim_{\e\to 0} \s_{\e} = \s_{*}\in (0,+\infty)$. By \eqref{w-q-e-pt2} and \eqref{w-q-e-lim}, we have $\|w^i_{\eta,\e}\|_{W^{1,2}(\Omega)}\leq C $. Thus $w^i_{\eta,\e}  \to \bar w^i  \ \mbox{weakly in} \ W^{1,2}(\Omega)$  and $w^i_{\eta,\e}  \to \bar w^i  \ \mbox{srtongly in} \ L^{2}(\Omega)$. Together with \eqref{est-ue-pe-1}, \eqref{lim-ue-pe-1} and the strong convergence $\tilde \uu_\e \to \uu$ and $w_{\eta,\e}^i\to \bar w^{i}$ in $L^2(\Omega)$, we have for the right-hand side of \eqref{Stokes-Omega-periodic-weak-2-1}:
 \ba\label{Stokes-Omega-periodic-weak-3-1}
\int_{\Omega} \nabla \tilde \uu_\e :  w_{\eta,\e}^i \otimes \nabla \phi\,\dx & \to \int_{\Omega} \nabla \uu :  \bar w^i \otimes \nabla \phi\,\dx = \int_{\Omega} \nabla \uu :  \nabla(\bar w^i  \phi)\,\dx,\\
\int_{\Omega} \nabla \phi \otimes  \uu_\e : \nabla w_{\eta,\e}^i \,\dx & \to \int_{\Omega} \nabla \phi \otimes  \uu : \nabla \bar w^i \,\dx =0,\\
\e^{-1} \int_{\Omega} \dive (\phi \tilde \uu_\e) \, q_{\eta,\e}^i \,\dx & =\e^{-1} c_{\eta}\int_{\Omega} \dive (\phi \tilde \uu_\e) \,  (c_{\eta}^{-1} q_{\eta,\e}^i)  \,\dx  = \s_{\e}^{-1} \int_{\Omega} \nabla \phi \cdot \tilde \uu_\e \,  (c_{\eta}^{-1} q_{\eta,\e}^i) \,\dx \\
& \to  \s_{*}^{-1}\int_{\Omega} \nabla \phi \cdot \uu \, \bar q^i \,\dx = \s_{*}^{-1} \int_{\Omega} \dive ( \phi  \uu) \, \bar q^i \,\dx  = 0,\nn
\ea
where we used the fact that $\bar w^i$ and $\bar q^i$ are constant.

\medskip

Again by  the strong convergence $\tilde \uu_\e \to \uu$ and $w_{\eta,\e}^i\to \bar w^{i}$ in $L^2(\Omega)$, we obtain
 \ba\label{Stokes-Omega-periodic-weak-3-4}
 \e^{-2} c_{\eta}^{2}\int_{\Omega} (\phi \tilde \uu_\e)\cdot e^i \,\dx & =  \s_{\e}^{-2}\int_{\Omega} \phi \tilde \uu_\e \cdot e^i \,\dx  \to \s_{*}^{-2}\int_{\Omega} \phi \uu \cdot e^i \,\dx,\\
 \int_{\Omega} p_\e \, \dive(w_{\eta,\e}^i \phi)\,\dx  & =   \int_{\Omega} \tilde p_\e \,  w_{\eta,\e}^i \cdot \nabla \phi\,\dx  \to  \int_{\Omega} p  \, \bar w^i \cdot \nabla \phi\,\dx =  \int_{\Omega} p   \, \dive(\bar w^i \phi)\,\dx.\nn
\ea

Finally, passing $\e\to 0$ in \eqref{Stokes-Omega-periodic-weak} implies
\ba\label{Stokes-Omega-periodic-weak-3-f-1}
\int_{\Omega} \nabla \uu :  \nabla(\bar w^i  \phi)\,\dx + \s_{*}^{-2}\int_{\Omega} \phi \uu \cdot e^i \,\dx = \int_{\Omega} \ff \cdot \bar w^i \phi\,\dx + \int_{\Omega} p   \, \dive(\bar w^i \phi)\,\dx. \nn
 \ea
 This is the Brinkmann's law in non perforated domain $\Omega$:
 \ba\label{Stokes-Omega-periodic-weak-3-f-2}
 \s_{*}^{-2}\uu = A(\ff - \nabla p + \Delta \uu) \ \Longleftrightarrow \ - \Delta \uu + \nabla p  +  \s_{*}^{-2} A^{-1}\uu  =  \ff.
 \ea
 Moreover, by the results in Section \ref{sec:convergence}, we have
  \ba\label{Stokes-Omega-periodic-weak-3-f-3}
  \uu\in W^{1,2}_0(\Omega;\R^d), \ p\in L_0^2(\Omega), \  \dive \uu = 0.
  \ea
The solution $(\uu,p)$ to \eqref{Stokes-Omega-periodic-weak-3-f-2}--\eqref{Stokes-Omega-periodic-weak-3-f-3} is uniquely determined; therefore the limit process holds for all subsequences and then holds for the whole sequence.
 
 \medskip

 We complete the proof of Theorem \ref{thm}.


\end{document}